\newcolumntype{C}{>{\centering\arraybackslash}X}
\newcolumntype{D}{>{\centering\arraybackslash}X}
\newtheorem{theorem}{Theorem}
\newtheorem{lemma}[theorem]{Lemma}
\newtheorem{observation}[theorem]{Observation}
\newtheorem{proposition}[theorem]{Proposition}
\newtheorem{corollary}[theorem]{Corollary}
\newtheorem*{claim*}{Claim}
\theoremstyle{remark}
\newcommand{\B}{\ensuremath{\mathcal{B}}}
\newcommand{\cF}{\ensuremath{\mathcal{F}}}
\begin{document}

\title{The Tur\'an number of Berge-$K_4$ in triple systems}

\author{ Andr\'as Gy\'arf\'as\thanks{Renyi  Institute of Mathematics, Budapest, Hungary}\thanks{Research supported in part by
NKFIH Grant No. K116769.}}

\maketitle

\begin{abstract}
A Berge-$K_4$ in a triple system is a configuration with four vertices $v_1,v_2,v_3,v_4$ and six distinct triples $\{e_{ij}: 1\le i< j \le 4\}$ such that $\{v_i,v_j\}\subset e_{ij}$ for every $1\le i<j\le 4$. We denote by $\B$ the set of Berge-$K_4$ configurations. A triple system is $\B$-free if it does not contain any member of $\B$. We prove that the maximum number of triples in a $\B$-free triple system on $n\ge 6$ points is obtained by the balanced complete $3$-partite triple system: all triples $\{abc: a\in A, b\in B, c\in C\}$ where $A,B,C$ is a partition of $n$ points with
$$\left\lfloor{n\over 3}\right\rfloor=|A|\le |B|\le |C|=\left\lceil{n\over 3}\right\rceil.$$
\end{abstract}

\section{Introduction}

Extending the notion of Berge paths and cycles, Gerbner and Palmer \cite{GP} introduced the notion of Berge-$G$ hypergraphs as follows.
A hypergraph $H=(V,F)$ is called Berge-$G$ if $G=(V,E)$ and there exists a bijection $g: E(G)\mapsto E(H)$ such that for $e\in E(G)$ we have $e\subset g(e)$.  For a fixed $G$, let $\B(G)$ denote the family of Berge-$G$ hypergraphs. For a family $\cF$ of $r$-uniform hypergraphs,  the Tur\'an number ${\rm ex}_r(n, \cF)$ is the largest number of edges in an $r$-uniform hypergraph on $n$ vertices that does not contain any member from a family $\cF$ as a subhypergraph.

Many asymptotic results are known for ${\rm ex}_r(n, \B(G))$, Gy\H ori and G. Y. Katona \cite{GK}, Gy\H ori, Lemons \cite{GL}, Gerbner and Palmer \cite{GP}, F\"uredi and \"Ozkahya \cite{FO},  Gerbner, Methuku and Vizer \cite{GMV}, Palmer, Tait, Timmons and Wagner \cite{PTTW}, Gr\'osz, Methuku, Tompkins. \cite{GMT}. Results of Alon and Shikhelman \cite{AS}, Gerbner and Palmer \cite{GP2} also imply bounds for ${\rm ex}_r(n, \B(G))$.

In case of $G=K_3$ a spectacular sharp result is that ${\rm ex}_3(n,\B(K_3))=\lfloor{n^2\over 8}\rfloor$ for $n\ge 3$. This was proved (among other results) independently by Gy\H ori \cite{G} and by Frankl, F\"uredi and Simonyi \cite{FFS} with an elegant reduction to Mantel's theorem. For $t\ge 13$, Maherani and Shahsiah \cite{MS} determined exactly ${\rm ex}_3(n,\B(K_t))$.

Here we focus on ${\rm ex}_3(n,\B(K_4))$ and from now on  $\B=\B(K_4)$. We refer to $3$-uniform hypergraphs as {\em triple systems}. A (special case of a) result of Mubayi \cite{M} is that ${\rm ex}_3(n, \B)$ is asymptotic to the maximum number of triples in a  $3$-partite triple system on $n$ vertices, i.e.
$${\rm ex}_3(n, \B)\sim f(n)=\left\lfloor{n\over 3}\right\rfloor \left\lfloor{n+1\over 3}\right\rfloor \left\lfloor{n+2\over 3}\right\rfloor.$$

Mubayi's result was sharpened by Pikhurko \cite{P} who proved that ${\rm ex}_3(n,\B)=f(n)$, for large enough $n$. In fact, both results were proved in a stronger form, only {\em one} specific member of $\B$  was forbidden,  the expansion, where the bijection $g$ maps each edge $e\in K_4$ to a triple $e\cup v_e$ so that the vertices $v_e$ are all different and different from the vertices of $K_4$ as well. Thus the expansion of $K_4$ has six triples on ten vertices, it is the largest member of $\B$.

We prove that  ${\rm ex}_3(n, \B)$ is equal to $f(n)$ for {\em every $n\ge 6$}.  For the small cases, ${\rm ex}_3(3,\B)=1$, ${\rm ex}_3(4, \B)=4$ are obvious, ${\rm ex}_3(5,\B)=5$ will be proved in Theorem \ref{exbk4n=56}.

\begin{theorem}\label{exbk4} For $n\ge 6$, ${\rm ex}_3(n, \B)=f(n)$.

\end{theorem}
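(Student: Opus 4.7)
My plan is to prove Theorem \ref{exbk4} by induction on $n$, with $n=6$ as the base case and the inductive step driven by a minimum-degree bound obtained from the inductive hypothesis.

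For $n=6$ we have $f(6)=8$, and I would establish this base case by fixing any vertex $v$ of a $\B$-free triple system $H$ on six vertices, applying the bound ${\rm ex}_3(5,\B)=5$ (Theorem \ref{exbk4n=56}) to $H-v$, and checking the few remaining cases $\deg(v)+5>8$ using a description of the extremal $5$-vertex $\B$-free configurations (essentially $K_4^{(3)}$ together with one additional triple).

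For the inductive step, suppose the theorem holds for $n-1\ge 6$ and let $H$ be a $\B$-free triple system on $n$ vertices with $e(H)\ge f(n)+1$. For every vertex $v$, $H-v$ is $\B$-free on $n-1$ vertices, so by induction $e(H-v)\le f(n-1)$ and hence
\[
\delta(H)\ \ge\ f(n)-f(n-1)+1\ =\ \left\lfloor\tfrac{n}{3}\right\rfloor\left\lfloor\tfrac{n+1}{3}\right\rfloor+1.
\]
In particular $\delta(H)$ strictly exceeds the minimum degree of the extremal balanced $3$-partite example, so the inductive step reduces to the structural statement: every $n$-vertex triple system with $\delta(H)\ge\lfloor n/3\rfloor\lfloor(n+1)/3\rfloor+1$ contains a Berge-$K_4$.

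To prove that structural statement, I would fix a vertex $v$ and analyze its link graph $L_v$, a graph on $n-1$ vertices with at least $\delta(H)$ edges. If some $L_v$ contains a triangle $\{a,b,c\}$, then $\{v,a,b\},\{v,a,c\},\{v,b,c\}\in H$, and it suffices to complete a Berge-$K_4$ on $\{v,a,b,c\}$ by choosing three further distinct triples through the pairs $\{a,b\},\{a,c\},\{b,c\}$; the minimum-degree bounds on $a,b,c$ supply enough extra triples that a Hall-type matching argument based on the codegrees $d(a,b),d(a,c),d(b,c)$ succeeds. If instead no link contains a triangle, I would use this triangle-freeness to show that the bipartitions of the $L_v$'s can be made globally consistent, forcing $H$ itself to be $3$-partite; this contradicts $e(H)>f(n)$ since any $3$-partite triple system on $n$ vertices has at most $f(n)$ triples.

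The principal obstacle is the distinctness condition on the six triples witnessing a Berge-$K_4$: a single triple of $H$ can cover up to three pairs of the quadruple simultaneously. I would encode the existence of a Berge-$K_4$ on a fixed quadruple as a bipartite matching between its six pairs and the triples covering them, and then check Hall's condition. The critical subcase is when the ``inside'' triple $\{a,b,c\}$ itself lies in $H$, so that it alone covers three of the six pairs and competes with the three link triples $\{v,a,b\},\{v,a,c\},\{v,b,c\}$ in the matching; handling this collision is the most delicate step, and I expect it to require a second appeal to the minimum-degree hypothesis applied to one of $a,b,c$ to produce the codegree surplus that makes Hall's condition go through.
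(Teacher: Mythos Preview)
Your inductive setup and minimum-degree bound are correct and match the paper. The structural statement you reduce to is also the right target. However, both branches of your proposed proof of that structural statement have genuine gaps.

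\textbf{Case 1 (some link has a triangle).} You commit to finding a Berge-$K_4$ on the fixed quadruple $\{v,a,b,c\}$ via Hall's condition, and you assert that the minimum-degree bounds on $a,b,c$ force the codegrees $d(a,b),d(a,c),d(b,c)$ to be large enough. This implication is false: vertex degree does not control pair codegree. Concretely, it is perfectly consistent with $\delta(H)\ge \lfloor n/3\rfloor\lfloor(n+1)/3\rfloor+1$ that the only triples of $H$ meeting $\{v,a,b,c\}$ in at least two points are $vab,vac,vbc$; each of $a,b,c,v$ can have its remaining $\delta(H)-O(1)$ triples entirely inside $V\setminus\{v,a,b,c\}$. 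Then all six pairs of $\{v,a,b,c\}$ are covered only by the three triples $vab,vac,vbc$, and Hall's condition fails. So no Berge-$K_4$ lives on this particular quadruple, and your argument gives no mechanism for choosing a better one.

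\textbf{Case 2 (every link triangle-free).} Triangle-free does not mean bipartite, so there need not be ``bipartitions of the $L_v$'s'' to make consistent. Even if every link were bipartite, this does not force $H$ to be $3$-partite: for instance, on $\{1,\dots,5\}$ the system $\{123,145,245,345\}$ has all links bipartite but is not $3$-partite. You would need a substantial additional argument (using the minimum-degree hypothesis) to get anywhere here, and none is indicated.

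For comparison, the paper's inductive step follows a different route. Starting from a Berge-triangle on $\{1,2,3\}$ of a specific shape (a $K_4^{3}-e$ or a tight path, whose existence follows from the degree bound), it studies the trace multigraph $G=G(\{1,2,3\})$ on the remaining $n-3$ vertices. A careful structural analysis bounds the multiplicity surplus $s(G)$, so that the underlying simple graph $G^*$ restricted to a large set $U$ still has more than $|U|^2/3$ edges. Tur\'an's theorem then yields a $K_4$ in $G^*[U]$, and any such $K_4$ immediately gives six \emph{distinct} triples (one per edge), hence a Berge-$K_4$. The point is that the Berge-$K_4$ is found on four vertices \emph{outside} $\{1,2,3\}$, located by a global edge-count rather than by a local Hall argument on a preselected quadruple; this is exactly what circumvents the codegree obstruction that breaks your Case~1.
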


The proof of Theorem \ref{exbk4} does not use that all configurations of $\B$  are excluded.  In fact, the exclusion of the ``maximal'' member, the extension of $K_4$, is not used. This gives the following corollary.

\begin{corollary}\label{cor} Let $B^*$ denote the expansion of $K_4$. Then  ${\rm ex}_3(n,\B\setminus B^*)=f(n)$ for $n\ge 6$.
\end{corollary}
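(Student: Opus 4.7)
The plan is to observe that Corollary \ref{cor} will follow essentially for free from the work done to prove Theorem \ref{exbk4}; the only real task is to verify the author's parenthetical claim that the proof of Theorem \ref{exbk4} never actually invokes the excluded configuration $B^*$.

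First I would establish the lower bound ${\rm ex}_3(n, \B\setminus B^*) \ge f(n)$. Since $\B\setminus \{B^*\} \subset \B$, the balanced complete 3-partite triple system witnessing the extremal bound of Theorem \ref{exbk4} is automatically $(\B\setminus B^*)$-free: any four vertices include two lying in the same part by pigeonhole, and no triple of a 3-partite system contains two vertices of the same part, so that pair cannot even be covered by a single triple, let alone by the six triples needed for a Berge-$K_4$.

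For the upper bound ${\rm ex}_3(n, \B\setminus B^*) \le f(n)$ I would read through the proof of Theorem \ref{exbk4} with a single question in mind: at each place where the argument extracts a Berge-$K_4$ from a triple system with more than $f(n)$ edges, could the six distinguished triples it produces form exactly the expansion $B^*$? Recall that $B^*$ lives on ten vertices, with its six triples each supplying a distinct ``extra'' vertex disjoint from $\{v_1,v_2,v_3,v_4\}$ and from each other. Thus, to rule out $B^*$ at a given step, it suffices to verify that two of the chosen triples share an extra vertex, or that some extra vertex lies inside $\{v_1,v_2,v_3,v_4\}$. If this holds at every step, then the same proof delivers a member of $\B\setminus B^*$, which is exactly what Corollary \ref{cor} needs.

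The main obstacle, naturally, is that without seeing the actual argument I cannot point to the specific spots where this check must be carried out. Still, I expect the verification to be routine: most natural ways of producing a Berge-$K_4$ in a dense triple system exploit high-degree pairs or large common link graphs, and hence produce six triples with heavy overlap rather than ten pairwise distinct vertices. In any borderline case that does threaten to realise $B^*$ on the nose, the density $\Theta(n^3)$ of the hypergraph should leave ample room to reselect one of the six extending triples for a more compact alternative, e.g.\ one whose extra vertex coincides with a previously used one or with some $v_i$. Once this case-by-case sanity check is completed, the same argument that proves Theorem \ref{exbk4} immediately yields the sharper statement of Corollary \ref{cor}.
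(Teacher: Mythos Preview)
Your proposal is correct and follows the same approach as the paper: verify that every Berge-$K_4$ actually exhibited in the proof of Theorem~\ref{exbk4} is strictly smaller than the ten-vertex expansion $B^*$. The paper carries out precisely this audit, observing that every configuration produced has at most nine vertices (the worst case arising in Proposition~\ref{repr}, on the set $\{1,2,3,x,y,z\}$ together with the three endpoints $e_i\setminus\{z\}$), so your criterion ``two triples share an extra vertex, or some extra vertex lies in $\{v_1,\dots,v_4\}$'' is always met and your speculative fallback of reselecting a triple is never needed.
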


Note that Pikhurko's result and Corollary \ref{cor} shows that there is no {\em unique} minimal subset $\B'\subset \B$ for which ${\rm ex}_3(n,\B')=f(n)$ for large enough $n$.

Assume that $H$ is a triple system and $A$ is a proper nonempty subset of $V(H)$.  The {\em trace of $H$} is a  multigraph $G_H(A)$ (multiloops allowed) on vertex set $V(H)\setminus A$. Every triple $xyz$ of $H$ with $x,y\in A, z\in V(H)\setminus A$ defines a loop on $z$ with label $\{x,y\}$ and every triple $xyz$ of $H$ with $x\in A, y,z\in V(H)\setminus A$ defines an edge $yz$ with label $\{x\}$. If $H$ is clear from the context, we use simply $G(A)$ instead of $G_H(A)$. The label of a loop or an edge is denoted by $\ell(v,v), \ell(v,w)$, respectively. Let $\mu(v), \mu(e)$ denote the multiplicity of a loop $vv$ and an edge $e$, respectively. When $|A|=1$, the trace is a simple graph, usually called the link of $A$. In a triple system the {\em degree of a vertex} $v$ is the number of triples containing $v$ and denoted by $d(v)$.

Theorem \ref{exbk4} is proved by combining induction on $n$ with  ${\rm ex}(m,K_4)=\lfloor {m^2\over 3}\rfloor$ (Tur\'an theorem) applied to a subgraph of $G=G(A)$ where $A$ is a triple in a suitable Berge-triangle of a $\B$-free $H_n$.  I have been recently informed by D. Gerbner that a different approach \cite{GMP} also implies Theorem \ref{exbk4} for $n\ge 9$.

\section{Proof of Theorem \ref{exbk4} and its corollary}

\begin{proof}

We prove Theorem \ref{exbk4} by induction. The base case $n=6$ (together with ${\rm ex}_3(5,\B)=5$) is proved in Theorem \ref{exbk4n=56}.  Assuming the theorem is true for $n-1\ge 6$, let $H_n$ be a $\B$-free triple system with $n$ vertices and $f(n)+1$ edges. From this we will get a contradiction.

We can apply induction if for some vertex $v$ of $H_n$, $d(v)\le f(n)-f(n-1)$. Indeed, then deleting $v$ and the triples containing $v$ we get a $B(K_4)$-free triple system
$H'$ with $n-1$ vertices and more than $f(n-1)$ edges, contradiction. Thus we may assume that the minimum degree $\delta=\delta(H_n)$ is larger than $f(n)-f(n-1)$. The actual values of $f(n)-f(n-1)$ are stated in the next observation.

\begin{observation}\label{diff}

$$
f(n)-f(n-1)=\left\{ \begin{array}{ll}
   k^2+2 &\mbox{if $n=3k$}\\
   k^2 &\mbox{if $n=3k+1$}\\
   k^2+k &\mbox{if $n=3k+2$}.\end{array}\right.
$$
\end{observation}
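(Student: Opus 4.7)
This is a direct arithmetic verification. My plan is to first derive a clean general formula for $f(n)-f(n-1)$, and then specialise it to each residue class of $n$ modulo $3$.

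I would begin by observing that $f(n) = \lfloor n/3 \rfloor \lfloor (n+1)/3 \rfloor \lfloor (n+2)/3 \rfloor$ and $f(n-1) = \lfloor (n-1)/3 \rfloor \lfloor n/3 \rfloor \lfloor (n+1)/3 \rfloor$ share the common factor $\lfloor n/3 \rfloor \lfloor (n+1)/3 \rfloor$. Since $(n+2)$ and $(n-1)$ differ by $3$, they have the same residue modulo $3$, so $\lfloor (n+2)/3 \rfloor - \lfloor (n-1)/3 \rfloor = 1$. Factoring out the common term gives the closed form
\[
f(n) - f(n-1) \;=\; \lfloor n/3 \rfloor \cdot \lfloor (n+1)/3 \rfloor.
\]

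Substituting $n = 3k$, $3k+1$, $3k+2$ in turn and evaluating the two surviving floor functions immediately produces the three stated cases of Observation \ref{diff}. There is no genuine obstacle: the statement is an elementary identity about floor functions, obtained in essentially one step once one spots that $f(n)$ and $f(n-1)$ share two of their three factors. A fully explicit alternative would be to compute $f(3k)=k^3$, $f(3k+1)=k^2(k+1)$, $f(3k+2)=k(k+1)^2$ directly and subtract, taking care that for $n=3k$ one has $n-1 = 3(k-1)+2$, so $f(n-1) = (k-1)k^2$.
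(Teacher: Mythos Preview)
Your method is clean and correct: factoring out $\lfloor n/3\rfloor\lfloor (n+1)/3\rfloor$ and using $\lfloor (n+2)/3\rfloor-\lfloor (n-1)/3\rfloor=1$ to get
\[
f(n)-f(n-1)=\left\lfloor \frac{n}{3}\right\rfloor\left\lfloor \frac{n+1}{3}\right\rfloor
\]
is the right computation, and the paper offers no proof to compare against (the Observation is simply asserted).

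However, you have not actually carried the substitution through, and if you do you will find that it does \emph{not} reproduce the first stated case. For $n=3k$ your formula gives $k\cdot k=k^2$, and your own explicit alternative gives $f(3k)-f(3k-1)=k^3-(k-1)k^2=k^2$ as well; neither yields $k^2+2$. A quick numerical check confirms this: $f(6)-f(5)=8-4=4=2^2$, not $6$. So the printed ``$k^2+2$'' in the Observation is a typo for $k^2$, and your proposal, which asserts that the substitution ``immediately produces the three stated cases,'' overlooks this discrepancy. (One can check that the later use of this Observation in Proposition~\ref{toomany} still goes through with the corrected value $k^2$.)
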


Two specific members of $\B(K_3)$ are $K_4^3-e$, three triples within four vertices and the tight path, with triples $abc,bcd,cde$ on vertex set $\{a,b,c,d,e\}$.

\begin{proposition}\label{btriangle} $H_n$ contains either a $K_4^3-e$ or a tight path.
\end{proposition}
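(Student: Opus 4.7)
The plan is to argue by contradiction: assume $H_n$ contains neither a $K_4^3-e$ nor a tight path, and deduce $|E(H_n)|\le n(n-1)/4$. Since $f(n)>n(n-1)/4$ for every $n\ge 7$ (a direct check in the three residue classes modulo $3$ using Observation \ref{diff}, with the gap growing cubically), this will contradict $|E(H_n)|=f(n)+1$.

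Call an unordered pair $\{x,y\}$ of vertices of $H_n$ \emph{heavy} if $p(x,y)$, the number of triples of $H_n$ containing $\{x,y\}$, is at least $2$, and \emph{light} otherwise. The key structural claim is that \emph{every triple of $H_n$ contains at most one heavy pair}. Establishing this claim is the main obstacle; once it is in hand, the rest is a short double counting.

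To prove the claim, I suppose some triple $T=\{a,b,c\}\in H_n$ contains two heavy pairs; up to relabeling I may assume $\{a,b\}$ and $\{a,c\}$ are both heavy (the alternative $\{a,b\},\{b,c\}$ is symmetric). Pick witnessing triples $\{a,b,d\}\neq T$ with $d\ne c$ and $\{a,c,e\}\neq T$ with $e\ne b$; note $d,e\in V(H_n)\setminus\{a,b,c\}$ since they appear as the third vertex of a triple with $\{a,b\}$ and $\{a,c\}$ respectively. If $d=e$ then $\{a,b,c\},\{a,b,d\},\{a,c,d\}$ are three distinct triples on the four vertices $\{a,b,c,d\}$, forming a $K_4^3-e$. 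If $d\ne e$ then the three triples $\{a,b,d\},\{a,b,c\},\{a,c,e\}$ live on the five distinct vertices $d,b,a,c,e$ and pairwise intersect in $\{a,b\},\{a,c\},\{a\}$, with the middle triple $\{a,b,c\}$ sharing a different pair with each of the other two; this is precisely a tight path. Either alternative contradicts the standing hypothesis, proving the claim.

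With the claim established I double-count pair-triple incidences. The total is $\sum_{\{x,y\}} p(x,y)=3|E(H_n)|$. By the claim each triple contains at most one heavy pair, so $\sum_{\{x,y\}\text{ heavy}} p(x,y)\le |E(H_n)|$; and each light pair contributes at most $1$, so $\sum_{\{x,y\}\text{ light}} p(x,y)\le\binom{n}{2}$. Adding these bounds gives $3|E(H_n)|\le |E(H_n)|+\binom{n}{2}$, i.e., $|E(H_n)|\le n(n-1)/4$, the desired contradiction.
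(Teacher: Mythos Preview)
Your proof is correct and takes a genuinely different route from the paper's. The paper argues locally: it fixes a vertex $v$, observes that a triangle in the link $G(\{v\})$ yields a $K_4^3-e$ and a $P_4$ in the link yields a tight path, so the link must be a star forest with at most $n-2$ edges; this is then pitted against the standing minimum-degree assumption $\delta(H_n)>f(n)-f(n-1)$, with a small extra argument needed for $n=7$ where the numerics just fail. Your argument is global: the observation that two heavy pairs in a common triple force either a $K_4^3-e$ (if the two witnessing third vertices coincide) or a tight path (if they differ) is exactly the same structural content, but you leverage it through the codegree double count $3|E(H_n)|=\sum p(x,y)\le |E(H_n)|+\binom{n}{2}$, giving $|E(H_n)|\le n(n-1)/4$ directly. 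This buys you two things: you never invoke the minimum-degree reduction, and you avoid the separate $n=7$ case. One small cosmetic point: invoking Observation~\ref{diff} (which records $f(n)-f(n-1)$) to verify $f(n)>n(n-1)/4$ is a detour; it is cleaner to check the inequality straight from $f(n)=\lfloor n/3\rfloor\lfloor(n+1)/3\rfloor\lfloor(n+2)/3\rfloor$ for $n=7,8,9$ and note that thereafter $f(n)\ge((n-2)/3)^3>n(n-1)/4$.
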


\begin{proof} If the statement is not true then the trace $G=G(\{v\})$ of any vertex $v\in V(H_n)$ contains no triangles or $P_4$-s, thus $G$
must be a star forest. Therefore $f(n)-f(n-1)< \delta \le n-2$ and from Observation \ref{diff} we get contradiction for every $n\ge 6$ except for $n=7$. However,
in this case we have $f(7)+1=13$ triples in $H_7$ thus there exists a vertex of degree at least six. Applying the argument to this vertex we get a
contradiction. \end{proof}

Using Proposition \ref{btriangle}, we can select a Berge-triangle $B$ in $H_n$ such that $$E(B)=\{123,12x,23y\}$$  where $x,y\in V(H_n)\setminus [3]$. If there exists
$K_4^3-e$ in $H_n$ then we assume $x=y$, otherwise $x\ne y$.

Consider $G=G(B)=G(\{1,2,3\})$, the trace  of $H_n$ on $V(H_n)\setminus [3]$. By our assumptions, there is a loop on $x$ with label $\{1,2\}$ and a loop
 on $y$ with  label $\{2,3\}$. If $x=y$ then there is a double loop on $x$ with labels $\{1,2\},\{2,3\}$, respectively.   Set $Z=V(G)\setminus \{x,y\}$.

We partition $Z$  into eight sets $Z_I$ where $I\subseteq [3]$; a vertex $v$ belongs to $Z_I$ if the union of the labels on the edges containing $v$ is $I$.

Vertices of $Z_{\emptyset}$ are isolated in $G$.
Vertices of $Z_1,Z_2,Z_3$ cannot be incident to loops or multiple edges of $G$. Edges between $Z_{ij}$ and $Z_{ik}$ are not multiedges and labeled with
$i$ for any choice of three different indices.

\begin{proposition}\label{repr}  Assume that $z\in Z$ and $e_1,e_2,e_3$ are distinct edges of $G$ containing $z$. Then the sets $\ell(e_1),\ell(e_2),\ell(e_3)$
have no distinct representatives.
\end{proposition}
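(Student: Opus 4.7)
The plan is to prove the contrapositive: if $\ell(e_1), \ell(e_2), \ell(e_3)$ do admit a system of distinct representatives $a_1, a_2, a_3 \in \{1,2,3\}$ (which must then be a permutation of $\{1,2,3\}$), I will construct a Berge-$K_4$ on vertex set $\{1,2,3,z\}$, contradicting the assumption that $H_n$ is $\B$-free.

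First, I would lift each $e_i$ back to the unique triple $T_i \in E(H_n)$ that produced it in the trace $G$: if $e_i$ is a loop at $z$ with label $\{a,b\}$, then $T_i = \{a,b,z\}$; if $e_i$ is an edge $zw$ with label $\{a\}$, then $T_i = \{a,z,w\}$. In either case $T_i$ contains $z$, and since $a_i \in \ell(e_i)$ we also have $a_i \in T_i$; hence $T_i \supset \{a_i, z\}$. Since distinct edges of $G$ come from distinct triples of $H_n$, the three triples $T_1, T_2, T_3$ are pairwise distinct.

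Next, I would exhibit the Berge-$K_4$ explicitly using the six triples $123, 12x, 23y, T_1, T_2, T_3$, and mapping the edges of $K_4$ on $\{1,2,3,z\}$ via $\{1,2\} \mapsto 12x$, $\{2,3\} \mapsto 23y$, $\{1,3\} \mapsto 123$, and $\{a_i, z\} \mapsto T_i$ for $i=1,2,3$. The last three assignments are well-defined and cover all of $\{1,z\}, \{2,z\}, \{3,z\}$ precisely because $\{a_1, a_2, a_3\} = \{1,2,3\}$. Pairwise distinctness of the six triples then reduces to checking that no $T_i$ coincides with one of $123, 12x, 23y$; this follows because $z \in T_i$ while $z \notin \{1,2,3,x,y\}$, the latter holding by the very definition $z \in Z = V(G) \setminus \{x,y\}$.

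There is no substantial obstacle here: the argument is essentially the observation that the Berge-triangle $B$ on $\{1,2,3\}$ extends to a Berge-$K_4$ on $\{1,2,3,z\}$ as soon as the three edges of $G$ at $z$ jointly cover all of $\{1,2,3\}$ in a matchable way. The only bookkeeping point is the distinctness check for the six triples, which rests entirely on $z$ avoiding the five ``special'' vertices $1,2,3,x,y$ of $B$.
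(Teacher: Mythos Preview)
Your proof is correct and follows exactly the same idea as the paper's: the three triples of the Berge-triangle $B$ cover the pairs inside $\{1,2,3\}$, and the triples $T_i$ lifted from the $e_i$ cover the pairs $\{a_i,z\}$, giving a Berge-$K_4$ on $\{1,2,3,z\}$. You have simply spelled out explicitly the bijection and the distinctness check (using $z\notin\{1,2,3,x,y\}$) that the paper leaves implicit.
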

\begin{proof}
Indeed, otherwise $\{1,2,3,z\}$ would span a member of $\B$: the pairs of $\{1,2,3\}$ are covered by the three triples of the Berge-triangle $B$ and the pairs $1z,2z,3z$ can be covered by the triples containing the three $e_i$-s. This is a contradiction.
\end{proof}

We collect some consequences of Proposition \ref{repr}.

\begin{proposition}\label{mult} The following properties hold.
\begin{itemize}
\item 1.  Loops and non-loop edges in $Z$ have multiplicity at most two.
\item 2.  Double loops in $Z$ form one-vertex components in $G$ (inside $Z_{123}$).
\item 3.  Multiple edges within $Z$ are inside some $Z_{ij}$.
\item 4.  Assume $v_1v_2,v_2v_3$ are double edges in $Z_{ij}$. Then no further edges or loops can be on $v_1$ or on $v_3$.
\end{itemize}
\end{proposition}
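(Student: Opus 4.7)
The plan is to derive items~1--3 directly from Proposition~\ref{repr} by exhibiting a system of distinct representatives in each forbidden configuration, and to handle item~4 by constructing an explicit Berge-$K_4$, since Proposition~\ref{repr} cannot give a contradiction on its own there. For \textbf{item~1}, any three parallel loops at a vertex would carry three distinct $2$-subsets of $[3]$, forcing the labels to be $\{1,2\},\{1,3\},\{2,3\}$, which admit the SDR $(1,3,2)$; any three parallel non-loop edges between $u$ and $v$ would carry the three distinct singleton labels $\{1\},\{2\},\{3\}$, which form their own SDR. Both possibilities contradict Proposition~\ref{repr}.

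For \textbf{item~2}, a double loop at $v$ carries two distinct $2$-subset labels whose union is all of $[3]$, so $v\in Z_{123}$. If $v$ had any further incident edge (necessarily a non-loop by item~1) with singleton label $\{r\}$, the three labels at $v$ would be two distinct $2$-subsets of $[3]$ together with $\{r\}$; a short case check on where $r$ sits relative to the two $2$-subsets shows an SDR always exists, contradicting Proposition~\ref{repr}. For \textbf{item~3}, a multi-edge $uv$ has exactly two edges with distinct singleton labels $\{i\},\{j\}$ by item~1; if $u$ carried any further edge whose label contained the remaining index $k$, then the three labels $\{i\},\{j\},\ell$ at $u$ would admit the SDR $(i,j,k)$, again contradicting Proposition~\ref{repr}. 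Hence every label at $u$ is contained in $\{i,j\}$, so $u\in Z_{ij}$, and symmetrically $v\in Z_{ij}$.

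For \textbf{item~4}, item~3 forces every further label at $v_1$ into $\{i,j\}$, and no triple of labels drawn from $\{\{i\},\{j\},\{i,j\}\}$ has an SDR, so Proposition~\ref{repr} is silent. We use instead the triple $\{1,2,3\}\in E(B)$ together with the four triples $\{i,v_1,v_2\},\{j,v_1,v_2\},\{i,v_2,v_3\},\{j,v_2,v_3\}$ coming from the two double edges, and suppose $v_1$ has an extra incidence: either a loop whose label is forced to be $\{i,j\}$, contributing the triple $\{i,j,v_1\}$, or a non-loop edge $v_1w$ with $w\ne v_2$ and label (say) $\{i\}$, contributing $\{i,v_1,w\}$. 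In each case a Berge-$K_4$ on $\{i,j,v_1,v_2\}$ is assembled by covering the pairs $ij,\,iv_1,\,jv_1,\,iv_2,\,jv_2,\,v_1v_2$ with $\{1,2,3\}$, the extra-incidence triple, $\{j,v_1,v_2\},\{i,v_2,v_3\},\{j,v_2,v_3\},\{i,v_1,v_2\}$ respectively; a short distinctness check (using $w\ne v_2$ in the non-loop case) yields the contradiction, and the argument for $v_3$ is symmetric. The main obstacle is precisely this step: because the extra incidence at $v_1$ contributes only ``old'' labels, one cannot use Proposition~\ref{repr} and must exploit the label redundancy in the two double edges to assemble a Berge-$K_4$ by hand.
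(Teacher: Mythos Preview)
Your proof is correct and follows essentially the same approach as the paper: items 1--3 are derived directly from Proposition~\ref{repr} via SDR arguments (the paper just says ``immediate consequences''), and for item~4 you build an explicit Berge-$K_4$ on $\{i,j,v_1,v_2\}$ using the two double edges plus the extra incidence, which is exactly the paper's construction on $\{1,2,v_2,v_3\}$ after the $v_1\leftrightarrow v_3$ symmetry. Your version is in fact slightly more thorough, since you treat the loop case at $v_1$ explicitly while the paper's written assignment only covers the non-loop edge case.
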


\begin{proof}Properties 1-3 are immediate consequences of Proposition \ref{repr}. By symmetry, it is enough to prove property 4 for $i=1,j=2$ and for $v_3$. Suppose $v_1v_2,v_2v_3$ are double edges  in $Z_{12}$ and $v_3z\in E(G)$ where $z\ne v_2$ and w.l.o.g. the label of $z$ is $\{1\}$.  Then we have a member of $B(K_4)$ spanned by  $\{1,2,v_2,v_3\}$ shown by the assignments
$$1v_2 \mapsto 1v_1v_2, 1v_3 \mapsto 1v_3z, 2v_2 \mapsto 2v_1v_2, 2v_3\mapsto 2v_2v_3, v_2v_3\mapsto 1v_2v_3, 12\mapsto 123,$$
leading to contradiction.   \end{proof}

Let $G^*$ be the simple graph obtained from $G$ by replacing its multiple edges by single edges and removing all loops. Obviously $|E(G^*)|=|E(G)|-s(G)$ where
$$s(G)=\sum_{v\in V(G)} \mu(v,v)+\sum_{e\in E(G)} (\mu(e)-1).$$

The {\em surplus subgraph $S$} of $G$ has vertex set $V(G)$ and its
edges are the (possible multiple) loops and multiple edges. The
connected components of $S$ are called {\em blocks}. A block $Q$  is
a {\em bad block}  if $s(S[Q])> |V(Q)|$, otherwise it is a {\em good
block}. For example, a block containing a single loop is good, a
block which has a double edge with a loop on both ends is bad.

We use the bad blocks to determine the {\em bad connected
components} of $G$, defined similarly: a connected component $C$ is
a {\em bad component}  if $s(G[C])> |V(C)|$, otherwise it is a {\em
good component}.

\begin{proposition}Every bad component contains at least one bad block.
\end{proposition}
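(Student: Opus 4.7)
The proposition should follow from a one-line additivity identity together with a pigeonhole step. First I would emphasize that, by construction, $S$ is a spanning subgraph of $G$: its vertex set is all of $V(G)$, and its edges are precisely the loops and the surplus copies of multiple edges of $G$. Consequently, for any connected component $C$ of $G$, the restriction $S[C]$ decomposes into the collection of blocks $Q_1,\dots,Q_k$ (the connected components of $S$ contained in $C$), these blocks partition $V(C)$, and every loop and every extra copy of a multiple edge that contributes to $s(G[C])$ lies inside exactly one $Q_i$. From this one reads off
\[
s(G[C]) \;=\; \sum_{i=1}^{k} s(S[Q_i]), \qquad |V(C)| \;=\; \sum_{i=1}^{k} |V(Q_i)|.
\]

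Now suppose $C$ is bad, i.e.\ $s(G[C]) > |V(C)|$, and suppose for contradiction that every block $Q_i \subseteq C$ is good, so $s(S[Q_i]) \le |V(Q_i)|$ for each $i$. Summing the $k$ inequalities and applying the identities above yields $s(G[C]) \le |V(C)|$, contradicting the badness of $C$. Hence at least one $Q_i$ must satisfy $s(S[Q_i]) > |V(Q_i)|$, and is therefore a bad block of $S$ lying inside $C$.

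The only fact that requires a brief sanity check is that no block of $S$ straddles two different components of $G$, so that the blocks inside $C$ really do partition $V(C)$ and account for all of $s(G[C])$; this is automatic because $S$ is a subgraph of $G$, so any two vertices in the same component of $S$ are in the same component of $G$. I do not anticipate any genuine obstacle: the statement is essentially a bookkeeping lemma that converts the global surplus bound on a component into a pointwise surplus bound on some block, and the real work will come when one invokes this reduction in the main counting argument that follows.
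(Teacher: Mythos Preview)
Your proposal is correct and follows essentially the same argument as the paper: decompose $s(G[C])$ as a sum over the blocks $Q_i$ contained in $C$, and observe that if every block were good the inequalities $s(S[Q_i])\le |V(Q_i)|$ would sum to $s(G[C])\le |V(C)|$, contradicting badness. The paper is slightly terser (it only uses that the blocks are vertex disjoint, giving $\sum_i |V(Q_i)|\le |V(C)|$, rather than your observation that they actually partition $V(C)$), but the substance is identical.
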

\begin{proof} In a connected component $C$ the blocks $Q_1,\dots,Q_m$ are vertex disjoint. If all blocks are good, then
$$s(G[C])=\sum_{i=1}^m s(G[Q_i])\le \sum_{i=1}^m |V(G[Q_i])|\le |V(C)|$$ thus $C$ would be a good component, contradiction.
\end{proof}

\begin{lemma}\label{blocks} With a suitable choice of the Berge-triangle $B$, a bad component $C$ is one of the following.
\begin{itemize}
\item 1. A triple loop on $x$, implying $y=x, s(C)=3,|V(C)|=1$.
\item 2. An $m$-star on $x$, defined as follows. There is a double loop on $x$ (labeled by $\{1,2\},\{2,3\}$ consequently $y=x$).  There are $m-1\ge 2$ double edges $xz_1,xz_2,\dots,xz_{m-1}$. The set $\{z_1,\dots,z_{m-1}\}$ is independent in $G$, i.e. contains no edges of $G$. Thus $s(C)=m+1, |V(C)|=m\ge 3$.
\item 3. A double loop, $s(C)=2,|V(C)|=1$.
\item 4. A dumbbell:  a double edge with single loops on both ends or a double loop on one end, $s(C)=3, |V(C)|=2$.

\end{itemize}
\end{lemma}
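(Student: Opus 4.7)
The plan is to reduce the classification of bad components to a classification of bad blocks and then re-attach simple edges of $G$. Given a bad component $C$, the preceding proposition guarantees a bad block $Q\subseteq C$; since the surplus subgraph $S$ only records loops and multi-edges, $Q$ has either $|V(Q)|=1$ (with $\mu(v,v)\ge 2$), $|V(Q)|=2$ (a multi-edge with excess loops), or $|V(Q)|\ge 3$ (several multi-edges sharing vertices), and I would treat the three sizes in turn.

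For $|V(Q)|=1$ with $\mu(v,v)=2$, Proposition~\ref{mult}(2) immediately isolates a double loop sitting in $Z$, giving Case~3. For $\mu(v,v)=3$, a triple loop at $v\in Z$ would carry labels $\{1,2\},\{1,3\},\{2,3\}$ with SDR $1,2,3$, contradicting Proposition~\ref{repr}; thus $v\in\{x,y\}$, and since $\{12x,13x,23x\}$ is a $K_4^{3}-e$ on $\{1,2,3,x\}$, the selection rule ``$x=y$ iff $H_n$ contains a $K_4^{3}-e$'' forces $y=x$, matching Case~1. For $|V(Q)|=2$ the block is a double edge $v_1v_2$ with total loop mass $\ge 2$ distributed as $1{+}1$ or $2{+}0$; Proposition~\ref{mult}(2) relegates any double loop to $\{x,y\}$, and the two distributions give the two variants of Case~4. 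For $|V(Q)|\ge 3$, Proposition~\ref{mult}(4) precludes two concatenated double edges $v_1v_2,v_2v_3$ from carrying further incidences at $v_1,v_3$, so $Q$ is a star of double edges around a central $v_2$; Proposition~\ref{repr} at $v_2$ keeps all double-edge labels inside a common $Z_{ij}$, and for $Q$ to be bad the central vertex must also carry a double loop, whence $v_2\in\{x,y\}$ by Proposition~\ref{mult}(2). Combined with the Berge-triangle's forced loop labels this pins $v_2=x$ and $y=x$, matching Case~2.

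Having fixed $Q$, the next step is to forbid any stray single edge of $G$ from attaching to $Q$ inside $C$. In each case the block already records a rich cluster of triples at $x$ (the three loop-triples in Case~1, the double loop together with the double edges $kxz_i$ in Case~2, and so on). If an extra edge $vw$ existed at a block vertex I would try to exhibit a Berge-$K_4$ directly on a carefully chosen $4$-set such as $\{1,2,x,v_1\}$ or $\{i,j,x,w\}$, contradicting $\B$-freeness. When such a direct construction is blocked---typically because Proposition~\ref{repr} is unavailable at $x$, since the loop labels $\{1,2\}$ and $\{2,3\}$ are already consumed by the forced triples $12x,23y$ of $B$---the phrase ``with a suitable choice of $B$'' is invoked: the rich cluster inside $C$ itself already contains another Berge-triangle (for instance, $\{12x,13x,23x\}$ on the $3$-set $\{1,2,x\}$ in Case~1), and swapping $B$ to this new triangle reassigns the corner role, so that $x$ becomes an ordinary $Z$-vertex in the new trace, where Proposition~\ref{repr} applies and eliminates the offending edge.

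The principal obstacle is exactly this rule-out-stray-edges step at the special vertices $x,y$, since Proposition~\ref{repr} applies only to $Z$. The delicate bookkeeping of which triples have been allotted to which pairs---and in particular avoiding double booking of the corner triples $12x$ and $23y$---is the real work; once the correct swap is identified for each offending configuration, the remainder is a mechanical case chase on label patterns inside the partition $Z_I$.
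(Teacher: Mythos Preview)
Your overall scaffold---classify bad blocks by size, pin special vertices via Proposition~\ref{mult}(2), then kill stray simple edges, invoking a swap of $B$ when the special vertex $x$ obstructs---matches the paper's strategy. However, two steps in your plan do not work as stated.

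First, the swap does not do what you say. Switching to the Berge-triangle on $\{1,2,x\}$ produces the trace $G'=G(\{1,2,x\})$; here $x$ is in the \emph{base set}, not in $V(G')$, so it certainly does not become ``an ordinary $Z$-vertex where Proposition~\ref{repr} applies.'' What actually happens is that $3$ becomes the new special vertex (with its own triple or double loop), and one must then show that an offending edge at $3$ in $G'$ forces an explicit Berge-$K_4$ on $\{1,2,3,x\}$. This is the content of the paper's display~(\ref{change}) and the parallel constructions in the $K_4^3-e$ case; Proposition~\ref{repr} never enters. Without these explicit bijections the swap buys you nothing.

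Second, you never treat the case $x\ne y$, i.e.\ when $H_n$ contains no $K_4^3-e$ and the Berge-triangle comes from a tight path. Here $x$ and $y$ each carry only a \emph{single} loop, so none of your block types 1--3 can occur at $x$ or $y$, and your $|V(Q)|\ge 3$ analysis (which needs a double loop at the centre) is vacuous. The paper's part~4 handles this separately: it shows, via several further explicit Berge-$K_4$ constructions, that a loop-carrying double edge in $Z$ would already yield a $K_4^3-e$ (contradiction), that double edges from $x$ or $y$ into $Z$ cannot chain or meet, and that the only surviving bad component touching $\{x,y\}$ is a dumbbell on the edge $xy$ itself. None of this follows from Propositions~\ref{repr} or~\ref{mult}, which are stated only for vertices of $Z$; your repeated invocation of those propositions at $x$ (e.g.\ ``Proposition~\ref{repr} at $v_2$'') is unjustified for the same reason.
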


\begin{proof} We prove the following statements.

\noindent
\begin{itemize}

\item {\em 1. For any choice of $B$, a bad component $C$ not containing $x$ or $y$ is a dumbbell.}

Assume $C$ is a bad component not containing $x$ or $y$. Consider any bad block $Q$ in $C$. By proposition \ref{mult} (2 and 3), $Q$  is either a double loop component in $Z_{123}$ or $Q$ is inside some $Z_{ij}$, say inside $Z_{12}$. Clearly $Q$ must contain some double edges and by Proposition \ref{mult} (4) the double edges of $Q$  form a star. If the star has one double edge then both ends must be a loop otherwise $Q$ is not bad. We claim that this is the only possibility for a bad block. Indeed, if the star has at least two double edges, then there must be at least one loop at an endpoint of the star and this is excluded in Proposition \ref{mult} (4).  Thus every bad block is a dumbbell. However the dumbbells are connected components at the same time, since any edge, say $zz_1$ with label $\{1\}$ touching the dumbbell $z_1z_2$, would define a member of $\B$ spanned by $\{1,2,z_1,z_2\}$ with the following assignments:
 $$12\mapsto 123, z_1z_2\mapsto 1z_1z_2, 1z_1\mapsto 1z_1z, 2z_1\mapsto 12z_1, 1z_2\mapsto 12z_2, 2z_2\mapsto 2z_1z_2.$$

\item {\em 2. Assume there exists a $K_4^3$ in $H_n$. Then, for some choice of $B\subset K_4^3$ and $x\in V(B)$, the bad component containing $x$ is a triple loop.}

Assume that $x$ contains a triple loop, i.e. $\{1,2,3,x\}$ spans $K_4^3$ (and $x=y$). If the triple loop is not a component of $G$ then there exists an edge $xz\in E(G)$ say with label $\{1\}$.

We claim that no vertex of $Z$ has a loop.  Indeed, suppose that $z_1\in Z$ has a loop ($z_1=z$ is possible). By symmetry, we may assume that the label of the loop is either $\{1,2\}$ or $\{2,3\}$. In both cases $\{1,2,3,x\}$ would span a member of $\B$ with the following assignments.
$$1x \mapsto 1xz, 12\mapsto 12z_1, 13\mapsto 123, 23\mapsto 23x, 2x\mapsto 12x, 3x\mapsto 13x,$$
$$1x \mapsto 1xz, 12\mapsto 12x, 13\mapsto 123, 23\mapsto 23z_1, 2x\mapsto 23x, 3x\mapsto 13x,$$
proving the claim.

Consider $G'=G(1,2,x)$, now vertex $3$ plays the role of $x$.  If the triple loop on $3$ is not a component of $G'$, there exists $3z'\in E(G')$ ($z=z'$ is possible).  The label of this edge can be one of $\{x,1,2\}$. For all the three choices $\{1,2,3,x\}$ spans  a member of $\B$ with the following assignments (the first assignment shows the choices).
\begin{equation}\label{change}
3x\mapsto 3xz', 2x\mapsto 23x, 1x\mapsto 1xz, 23\mapsto 123, 13\mapsto 13x, 12\mapsto 12x;
\end{equation}
$$13\mapsto 13z', 2x\mapsto 23x, 1x\mapsto 1xz, 23\mapsto 123, 3x\mapsto 13x, 12\mapsto 12x;$$
$$23\mapsto 23z', 2x\mapsto 23x, 1x\mapsto 1xz, 3x\mapsto 13x, 13\mapsto 123, 12\mapsto 12x,$$

leading to contradiction. Thus in $G$ or in $G'$ the triple loop is the (bad) component containing $x$.

\item {\em 3. Assume that $H_n$ contains $K_4^3-e$ but does not contain $K_4^3$. Then, for some choice of $B\subset K_4^3-e$ and $x\in V(B)$, the bad component containing $x$ is a double loop or a dumbbell or an $m$-star.}

    Starting from a $K_4^3-e$, we have a double loop on $x$, with labels $\{1,2\},\{2,3\}$ (again, $x=y$ in this case).  We determine first the block $Q$ containing $x$. If no multiedge is incident to $x$ then $Q$ is the double loop on $x$. Suppose we have a double edge $xz$ with label $\{i,j\}$ in $E(G)$.

    We claim that no other loops or edges can be incident to $z$. Indeed, a loop on $z$ must be labeled with $\{i,j\}$ and the label of an edge on $zz_1$ must be $\{i\}$ or $\{j\}$, say $\{i\}$,  because of Proposition \ref{repr}. Therefore $\{i,j,x,z\}$ would span a member of $\B$. Indeed,
$iz\mapsto ijz$ if we have a loop on $z$, $iz\mapsto izz_1$ if we have an edge $zz_1$. Then $jz\mapsto jxz, xz\mapsto ixz$ and the triple $\{ix,jx,ij\}$ has obviously a bijection to $123,12x,23x$. Thus we get a contradiction.

Therefore $Q$ consists of a double loop and some (at least one) double edges on $x$. If $Q$ is a component of $G$ we have a star or a dumbbell as a bad component. Otherwise there exists $z\in Z$ different from all vertices of the star, such that $xz\in E(G)$, w.l.o.g. with label $\{1\}$.

As in the previous case, we consider $G'=G(1,2,x)$, now vertex $3$ plays the role of $x$. With the argument of the previous paragraph we get that a star on $3$ is the bad block $Q$ of $G'$. If $Q$ is a component of $G'$ we have the star or a dumbbell as a bad component. Otherwise there exists $z'\in Z$ different from all vertices of the star such that $3z'\in E(G')$, implying $3z'w\in E(H_n)$ where $w\in \{x,1,2\}$. However, for all choices of $w$ we get a contradiction exactly as in (\ref{change}).

\item {\em 4. Assume that $H_n$ does not contain $K_4^3-e$. Then a bad component on $x$ (or on $y$) is a dumbbell on $xy$}.

Suppose there is a double edge $zz'$ in some $Z_{ij}$, say in $Z_{12}$ which is incident to a loop, say $zz$.  We can define $B=\{12z,1zz',2zz'\}$ and  $G''=G(\{1,2,z\})$. Now $z'$ can be in the role of $x$, it has a double loop, thus there is a $K_4^3-e$ in $H_n$, contradiction.  The same argument eliminates
a double edge from $x$ or from $y$ to a loop on $z$ for $z\in Z$. Likewise, a double edge $xy$ with label $\{1,2\}$ or label $\{2,3\}$ can be eliminated this way.

It is impossible to have a double edge from $x$ or from $y$ to a double edge in $Z$. Indeed, assume that there are double edges $xz,zz'$ with $z,z'\in Z$. Let the label of $xz$ be w.l.o.g. $\{1,i\}$, the label of $zz'$ must be also $\{1,i\}$. Then $\{1,i,x,z\}$ spans a member of $\B$ with assignment
$$1i\mapsto 123, 1x\mapsto 12x, ix\mapsto ixz, 1z\mapsto 1zz', iz\mapsto izz', xz\mapsto 1xz,$$
giving contradiction.

By a similar argument, it is also impossible that $x,y$ both send a double edge to the same vertex $z\in Z$:
$$1i\mapsto 123, 1x\mapsto 12x, ix\mapsto iyz, 1z\mapsto 1yz, iz\mapsto iyz, xz\mapsto 1xz.$$

We conclude that either $x,y$ belong to distinct blocks (both a single loop with some double edges), or $x,y$ are joined with a double edge with label $\{1,3\}$. In the former case the blocks are good blocks, otherwise  $x,y$ define a (bad) dumbbell block $Q$. We claim that in this case $Q$ is a component.

Indeed, if there is an edge from $\{x,y\}$ to $Z$, say $xz$ with $z\in Z$ and with label $\{1\}$ or label $\{2\}$ then $\{1,2,x,y\}$ spans a member of $\B$; and  if the label is $\{3\}$ then $\{1,3,x,y\}$ spans a member of $\B$:
$$1x\mapsto 1xz, 2x\mapsto 12x, 1y\mapsto 1xy, 2y\mapsto 23y, 12\mapsto 123, xy\mapsto 3xy;$$
$$2x\mapsto 2xz, 1x\mapsto 12x, 1y\mapsto 1xy, 2y\mapsto 23y, 12\mapsto 123, xy\mapsto 3xy;$$
$$3x\mapsto 3xz, 1x\mapsto 12x, 3y\mapsto 23y, 1y\mapsto 1xy, 13\mapsto 123, xy\mapsto 3xy,$$
proving the claim.
\end{itemize}

This finishes the proof of Lemma \ref{blocks}. \end{proof}


Assume that the connected components of $G$ are $C_1,C_2,\dots$ and let
$I,J$ denote the index sets  of the good and bad components,
respectively.  Let $U\subset V(G)$ be the set  of vertices in $G$
uncovered by the bad components. Then we have
\begin{equation}\label{count}
s(G)=\sum_{j\in J} s(G[C_j])+\sum_{i\in I} s(G[C_i]) \le \sum_{j\in J}
s(G[C_j])+|U|.
\end{equation}

Now we estimate the number of edges in $G^*$. Using Proposition \ref{blocks}, assume  that we have the following bad components:
$p$ double loops, $q$ dumbbells and an $m$-star or a triple loop on $x$.
The bad components cover $m+p+2q$ vertices where the $m=1$ case occurs if $x$ is covered by a triple loop, $m\ge 3$ if $x$ is covered by an $m$-star (the $m=2$ case is considered as a dumbbell). The $m=0$ case is when no bad component covers $x$.  Then  $|U|=n-3-(m+p+2q)$ and $s(G)=m'+2p+3q$ where $m'=3$ if $m=1$, otherwise $m'=m+1$.
Therefore, by (\ref{count})

$$s(G)\le n-3-(m+p+2q)+(m'+2p+3q)=n-3+(\rho+p+q)$$
where $\rho=2$ if $m=1$ otherwise $\rho=1$.

Using $\delta(H_n)\ge f(n)-f(n-1)+1$ for the vertices $\{1,2,3\}$ (and subtracting one because the edge $\{1,2,3\}$ does not contribute to edges of $G$), we have
$$|E(G^*)|=|E(G)|-s(G)\ge 3\delta(H_n) -s(G) \ge 3(f(n)-f(n-1)) -(n-3+(\rho+p+q)).$$
The union of the bad blocks contain $m-1+q$ edges of $G^*$ ($m-1$ in the $m$-star, $q$ in the dumbbells) thus
\begin{equation}\label{edgecount}
|E(G^*[U])|\ge 3(f(n)-f(n-1)) -(n-3+(m-1+\rho+p+2q))=M.
\end{equation}
 Since $0\le m+p+2q\le n-3$, we can write $m+p+2q=\alpha(n-3)$ with $0\le \alpha \le 1$. Then $$|V(G^*[U])|=(1-\alpha)(n-3), M=3(f(n)-f(n-1))-((n-3)(\alpha+1)+\rho-1).$$

We use Tur\'an's theorem to finish the proof of Theorem \ref{exbk4}, by showing that $G^*[U]$ contains a $K_4$, thus $H_n$ contains a member of $\B$, leading to contradiction. We need to show that
$M\ge {|V(G^*[U])|^2\over 3}$, i.e.
(using that $\rho -1\le 1$)

\begin{proposition}\label{toomany}$9(f(n)-f(n-1)) -3(1+\alpha)(n-3)-(1-\alpha)^2(n-3)^2-3\ge 0$.
\end{proposition}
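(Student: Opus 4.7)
The plan is to treat the left-hand side as a function $F(\alpha)$ of $\alpha \in [0,1]$ with $n$ fixed, show that $F$ is concave in $\alpha$, and conclude that it suffices to verify the bound at the two endpoints $\alpha=0$ and $\alpha=1$. Writing $t = n-3$ and expanding,
\begin{align*}
F(\alpha) &= 9\bigl(f(n)-f(n-1)\bigr) - 3(1+\alpha)t - (1-\alpha)^2 t^2 - 3 \\
          &= \bigl[9(f(n)-f(n-1)) - 3 - 3t - t^2\bigr] + (2t^2 - 3t)\alpha - t^2 \alpha^2,
\end{align*}
so the coefficient of $\alpha^2$ is $-t^2$. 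Since the induction has $n \ge 7$, we have $t \ge 4$, hence $F$ is strictly concave on $[0,1]$ and its minimum on that interval is attained at one of the two endpoints.

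Next, I would record the two endpoint inequalities
\[
F(0) = 9\bigl(f(n)-f(n-1)\bigr) - 3(n-3) - (n-3)^2 - 3 \ge 0,
\]
\[
F(1) = 9\bigl(f(n)-f(n-1)\bigr) - 6(n-3) - 3 \ge 0,
\]
and split each into the three residue classes of $n$ modulo $3$, substituting the values of $f(n)-f(n-1)$ from Observation \ref{diff}. A straightforward simplification will yield, for $n=3k,\,3k+1,\,3k+2$, that $F(0)$ equals $9k+15,\,3k-1,\,6k-1$ and that $F(1)$ equals $9(k-1)^2+24,\;9(k-1)^2,\;9k(k-1)+3$ respectively. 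Since $n \ge 7$ forces $k \ge 2$ in every case, all six quantities are strictly positive, and the proposition follows.

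I do not foresee a conceptual obstacle; the only step that deserves care is noticing the concavity in $\alpha$, which replaces a continuum of inequalities by two endpoint checks. The rest is a mechanical $3 \times 2$ case analysis driven by Observation \ref{diff}, so the main risk is only arithmetic bookkeeping in simplifying the six closed-form expressions above.
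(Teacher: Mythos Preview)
Your proof is correct and essentially matches the paper's: both split by the residue of $n$ modulo $3$ using Observation~\ref{diff} and handle the quadratic dependence on $\alpha$ via the constraint $\alpha\in[0,1]$. The only cosmetic difference is that you invoke concavity to reduce to the endpoints $\alpha\in\{0,1\}$, while the paper instead uses $\alpha^2\le\alpha$ to linearize; your values $F(0)=9k+15,\,3k-1,\,6k-1$ are exactly the constants the paper arrives at in its three cases.
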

\begin{proof} Using Observation \ref{diff}, we have three very similar cases.
\begin{itemize}

\item 1. $n=3k$. Then $9(f(n)-f(n-1))=9(k^2+2)=(3k-3)^2+18k+9$ thus
$$ 18k+9+(3k-3)^2-3(1+\alpha)(n-3)-(1-\alpha)^2(n-3)^2-3=$$
$$=18k+9 -3(3k-3)(1+\alpha)+(3k-3)^2(2\alpha-\alpha^2)-3\ge$$
$$=\alpha((3k-3)^2-3(3k-3))+9k+15\ge 0$$
if $k\ge 2$ (for the last inequality we used $\alpha\ge \alpha^2$).

\item 2. $n=3k+1$. Then $9(f(n)-f(n-1))=9k^2=(3k-2)^2+12k-4$ thus

$$12k-4+(3k-2)^2 -3(1+\alpha)(n-3)-(1-\alpha)^2(n-3)^2-3=$$
$$=12k-4-3(3k-2)(1+\alpha) + (3k-2)^2(2\alpha-\alpha^2)-3\ge$$
$$=\alpha((3k-2)^2-3(3k-2))+3k-1\ge 0$$
if $k\ge 2$ (using again $\alpha\ge \alpha^2$).

\item 3. $n=3k+2$. Then $9(f(n)-f(n-1))=9(k^2+k)=(3k-1)^2+15k-1$ thus

$$15k-1+(3k-1)^2 -3(1+\alpha)(n-3)-(1-\alpha)^2(n-3)^2-=$$
$$=15k-1-3(3k-1)(1+\alpha) + (3k-1)^2(2\alpha-\alpha^2)-3\ge$$
$$=\alpha((3k-1)^2-3(3k-1))+6k-1\ge 0$$
if $k\ge 2$ (using again $\alpha\ge \alpha^2$).
\end{itemize}
This proves Proposition \ref{toomany} and finishes the proof of Theorem \ref{exbk4}. \end{proof} \end{proof}

\noindent{\bf Proof of Corollary \ref{cor}.} One can easily check that the configurations of $\B$ whose absence was used in the proof of Theorem \ref{exbk4} had at most nine vertices. In fact, the only place where a nine-vertex configuration of $\B$ could appear was in the proof of  Proposition \ref{repr}, where the vertex set was $\{1,2,3,x,y,z\}$ and the possible three vertices of $e_i\setminus \{z\}$ for $i=1,2,3$.  In all other places we referred to configurations of at most seven vertices. \qed

\section{Launching the induction}
It is possible that the next theorem can be proved by a computer program, but we prove it by traditional ways. The first statement, ${\rm ex}_3(5,\B)=5$, was proved originally in \cite{AG} as a lemma to show that the $2$-color Ramsey number of $\B(K_4)$ in triple systems is equal to six.
\begin{theorem}\label{exbk4n=56} ${\rm ex}_3(5,\B)=5$ and ${\rm ex}_3(6,\B)=8$.
\end{theorem}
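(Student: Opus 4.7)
The lower bounds come from explicit $\B$-free constructions. For $n=5$, the five triples $\{1,2,3\}, \{1,2,4\}, \{1,2,5\}, \{1,3,4\}, \{3,4,5\}$ form a suitable example; one checks each of the five $4$-subsets in turn and shows that the relevant pairs admit no system of distinct representatives among the triples containing them, so no Berge-$K_4$ is present. For $n=6$, the balanced $3$-partite triple system with parts $\{1,2\}, \{3,4\}, \{5,6\}$ has $8$ triples, and every $4$-subset of $V(H)$ necessarily contains two vertices from a common part; the corresponding pair lies in no triple, so a Berge-$K_4$ cannot exist.

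For the upper bound when $n=5$, suppose $H_5$ has $6$ triples. The degree sum is $18$, so $\Delta(H_5)\geq 4$. The plan is to split cases on $\Delta$. If $\Delta\ge 5$, pick $v$ with $d(v)\geq 5$; its link on the other four vertices is $K_4$ or $K_4$ minus an edge, and the remaining triple(s) combine with link-triples to yield a Berge-$K_4$ by an explicit bijection on some $\{v\}\cup T$. If $\Delta=4$ the degree sequence is either $(4,4,4,4,2)$ or $(4,4,4,3,3)$. In the first sub-case, all four triples on $V\setminus\{w\}$ (where $w$ is the degree-$2$ vertex) must be present, and the two triples through $w$ extend these via Hall's theorem to a Berge-$K_4$. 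In the second sub-case, labelling the degree-$3$ vertices $u,v$ and the degree-$4$ vertices $a,b,c$, degree counting forces the number $k$ of triples through both $u$ and $v$ to lie in $\{0,1\}$, pinning $H_5$ down to a short list of configurations each admitting an explicit Berge-$K_4$.

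For the upper bound when $n=6$, suppose $|E(H_6)|=9$. If $\delta(H_6)\le 3$, I delete a minimum-degree vertex to obtain a $5$-vertex triple system with at least $6$ triples, which by the previous part contains a Berge-$K_4$. So assume $\delta(H_6)\geq 4$; the degree sum being $27$ forces some vertex $v$ with $d(v)\geq 5$, whose link has at least $5$ edges on the other five vertices. If that link contains a $K_4$ on $\{a,b,c,d\}$, then $\{v,a,b,c\}$ spans a Berge-$K_4$ via the assignment $ab\mapsto vab$, $ac\mapsto vac$, $bc\mapsto vbc$, $va\mapsto vad$, $vb\mapsto vbd$, $vc\mapsto vcd$. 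Otherwise the link is $K_4$-free with $5$--$8$ edges; since any $5$-vertex, $5$-edge graph contains a triangle or a $P_4$, Proposition~\ref{btriangle} yields a Berge-triangle, and I then run the trace/block analysis of Lemma~\ref{blocks} on the three-vertex trace $G(\{1,2,3\})$, using $\delta(H_6)\geq 4$ to bound loops and multiedges, and exhibit a Berge-$K_4$ by an explicit pair-to-triple assignment.

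The main obstacle is this final case ($n=6$, $\delta\ge 4$, and the link of every high-degree vertex $K_4$-free): the Tur\'an-style slack of Proposition~\ref{toomany} is unavailable because the trace has only three vertices, so one cannot invoke ${\rm ex}(m,K_4)$ to produce the $K_4$ from the main proof. The work is a finite but intricate case analysis of the possible $K_4$-free link structures with $5$--$8$ edges on five vertices, combined with the constraint $\delta(H_6)\ge 4$, used to extract six distinct triples witnessing a Berge-$K_4$.
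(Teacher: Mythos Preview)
Your proposal is a plan rather than a completed proof. The overall architecture is sound---lower bounds by construction, upper bounds by case analysis on degree data, and the reduction from $n=6$ to $n=5$ by deleting a low-degree vertex---but the two places that carry the real weight are left unfinished. For $n=5$, the $(4,4,4,3,3)$ subcase is only asserted to reduce to ``a short list of configurations each admitting an explicit Berge-$K_4$''; you give neither the list nor the bijections. For $n=6$, you explicitly flag the final case (minimum degree at least~$4$, link of the high-degree vertex $K_4$-free) as ``the main obstacle'' and describe the remaining work as ``a finite but intricate case analysis'' that you do not carry out. Invoking Lemma~\ref{blocks} does not by itself close this case: that lemma only classifies bad components of the trace, and in the main proof it is the subsequent Tur\'an step (Proposition~\ref{toomany}) that actually produces a $K_4$ in $G^*[U]$---precisely the step you concede is unavailable on a three-vertex trace. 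So the gap is genuine, not cosmetic. (Incidentally, your lower bound for $n=5$ is more work than needed: any five triples are automatically $\B$-free, since a Berge-$K_4$ requires six.)

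The paper completes the argument with a different organizing device: the graph $W$ of \emph{uncovered} pairs. For $n=5$ it splits on whether $W$ has an edge; for $n=6$, after establishing $\delta\ge 4$ as you do, it shows that $W$ is a matching $M$ with $|M|\le 2$ and treats the cases $|M|=0,1,2$ separately (the case $|M|=2$ further split by whether a matched vertex has degree~$5$). In each branch, knowing which pairs are uncovered pins down enough of the triple system that an explicit six-triple assignment witnessing a Berge-$K_4$ can be written out, sometimes via the trace $G(\{i,j\})$ of a well-chosen pair. Your degree-sequence and link route is viable in principle and would ultimately require comparable casework, but the uncovered-pair viewpoint turns out to be the cleaner handle for actually finishing the enumeration.
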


\begin{proof}
\smallskip
\noindent
 A pair of vertices in a triple system $H$ is {\em uncovered} if no triple of $H$ contains the pair. Let $W$ be the graph formed by the uncovered pairs.

\noindent
\smallskip
{\bf I. $n=5$.}
Five triples clearly cannot form a member of $\B$ thus we have to show ${\rm ex}_3(5,\B)< 6$. Assume $H_5$ is a triple system with six triples on vertex set $[5]$ without any member of $\B$. Observe that the maximum degree of $H_5$ is at least $\lceil {6\times 3\over 5}\rceil=4$.

\begin{itemize}
\item 1. Suppose that $W$ has an edge, for some $1\le i<j\le 5$, the pair $ij$ is not covered by any triple of $H$. By symmetry, let $i=1,j=2$. Then $H_5$ either contains the six triples meeting $\{1,2\}$ in one vertex or one of them, say $234$ is missing. In the first case the assignment $$e_{13}\mapsto 134,e_{14}\mapsto 145,e_{15}\mapsto 135,e_{34}\mapsto 234,e_{35}\mapsto 235,e_{45}\mapsto 245$$ defines a member of $\B$, otherwise the assignment $e_{34}\mapsto 234$ is replaced by the assignment $e_{34}\mapsto 345$ to get a member of  $\B$. In both cases we have a contradiction.

\item 2. Every pair of $[5]$ is covered by some triple of $H_5$.  Let $1$ be a vertex of maximum degree. If its degree is at least five, then the trace $G=G(\{1\})$ contains a $4$-cycle $23452$. Then $\{2,3,4,5\}$ spans a member of $\B$ with the two distinct triples covering pairs $24,35$ and the four triples on $1$, contradiction. Thus the degree of $1$ is four, and we may assume that $G$ does not contain a $4$-cycle. Thus $G$ is a triangle, say $2342$ with a pendant edge $45$. There are two triples of $H_5$ within $\{2,3,4,5\}$ and apart from symmetry there are four possible choices (with respect to the four edges of $G$). In each case we can easily find a member of $\B$ spanned by $\{1,2,3,4\}$ as shown below (the first two assignments are the ones using the two edges of $H_5$ inside $\{2,3,4,5\}$.

$$34\mapsto 234, 23\mapsto 235, 12\mapsto 123, 13\mapsto 134, 14\mapsto 145, 24\mapsto 124;$$
$$24\mapsto 234, 34\mapsto 345, 12\mapsto 124, 13\mapsto 134, 14\mapsto 145, 23\mapsto 123;$$
$$23\mapsto 235, 24\mapsto 245, 12\mapsto 124, 13\mapsto 123, 14\mapsto 145, 34\mapsto 134;$$
$$24\mapsto 245, 34\mapsto 345, 12\mapsto 124, 13\mapsto 134, 14\mapsto 145, 23\mapsto 123.$$
\end{itemize}

\noindent
\smallskip
{\bf II. $n=6$.}
Suppose we have $9=f(6)+1$ triples in $H_6$. The maximum degree is at least $\lceil {9\times 3\over 6}\rceil=5$. The minimum degree is at least $4$ otherwise deleting the vertex we get six triples on five vertices contradicting Case I.

This implies that $W$ has no vertex with degree at least two, thus its edges form a matching $M$.  If $|M|=3$ then some vertex on $M$ has degree at least $5$, contradiction.

\begin{itemize}
\item 1.  Suppose $|M|=2$, say $12,34$ are the edges of $W$ and some vertex on $M$, say $1$, has degree $5$. Then the five triples containing $1$ are determined:
$e_1=135,e_2=136,e_3=145,e_4=146,e_5=156$. Then, since the degree of $2$ is at least four, one of the triples $235,236$ must be an edge of $H_6$. Then $\{1,3,5,6\}$ spans a member of $\B$ in $H_6$, shown by one of the assignments
$$35\mapsto 235, 13\mapsto 135, 15\mapsto 145, 16\mapsto 146, 36\mapsto 136, 56\mapsto 156,$$
$$36\mapsto 236, 13\mapsto 136, 15\mapsto 145, 16\mapsto 146, 35\mapsto 135, 56\mapsto 156.$$

\item 2. Suppose $|M|=2$, say $12,34$ are the edges of $W$ and vertices $1,2,3,4$ have degree four. Then $d(5)+d(6)=11$ implying that the trace $G(\{5,6\})$ of $H_6$ must contain a $P_4$, say $1,3,2,4$ with double edges, and (by Property 4. in Proposition \ref{mult})  $\{2,3,5,6\}$ spans a member of $\B$.

\item 3.  Suppose $|M|=1$, $12$ is an edge of $W$.   Now the trace $G=G(\{1,2\})$ has no loops. Note that $G$ cannot contain a $4$-cycle, say  $3,4,5,6,3$ because then with the edges covering the pairs $35,46$ we get a member of $\B$ on $\{3,4,5,6\}$.  Since $G$ contains at least eight edges, the trace must be a triangle with a pendant edge formed by double edges, say $34,45,35,36$.  Now $\{1,3,4,5\}$ spans a member of $\B$, contradiction.

\item 4. Suppose $|M|=0$ i.e. $W$ has no edges, every pair of $[6]$ is covered by some edge of $H_6$. No vertex of $H_6$ has degree at least 6. Indeed, if $1$ is such a vertex, then the trace $G=G(\{1\})$ has at least six edges and either contains a $4$-cycle, say $23452$ or isomorphic to two edge disjoint triangles, say with edges $23,34,24,25,26,56$. In the first case $\{2,3,4,5\}$ spans a member of $\B$ with assignments
$$23\mapsto 123, 34\mapsto 134, 45\mapsto 145, 25\mapsto 125$$
and the pairs $24,35$ are mapped to the triples containing them.  In the second case one can easily see that if any triple in $\{3,4,5,6\}$ is not in $H_6$ then we have a member of $\B$. For example, assume that $345$ is not in $H_6$, thus the edges $e_{35},e_{45}$ that cover the pairs $35,45$ are different and $\{2,3,4,5\}$ spans a member of $\B$ with the assignments
$$35\mapsto e_{35}, 45\mapsto e_{45}, 23\mapsto 123, 24\mapsto 124, 25\mapsto 125,  34 \mapsto 134,$$
giving a contradiction.

Thus the degree sequence of $H_6$ is $4,4,4,5,5,5$. Counting the codegrees, i.e. the number of times the vertex pairs of $H_6$ are covered by the triples, we get that at least three pairs $x_iy_i$ have codegree one. Indeed, otherwise the sum of the codegrees is at least $2+13\times 2>3\times 9$, contradiction.

 We claim that for $i=1,2,3$, the degrees of $x_i,y_i$ are equal to $4$. Indeed, otherwise the trace $G=G(\{x_i,y_i\})$ has at least seven edges, implying that it must contain a pair of incident double edges plus a further loop or edge incident to an endpoint, leading to a member of $\B$. In fact, this is Property 4. in Proposition \ref{mult}.

We are left with one case to consider: the pairs $x_iy_i$ form a triangle, say $1,2,3,1$ their degrees are equal to $4$ and vertices $4,5,6$ has degree $5$. Then $\{1,2,3\}$ cannot be in $E(H_6)$, that would allow at most eight triples. The trace $G=G(\{1,2,3\})$ on $\{4,5,6\}$ cannot contain a triple edge, say $45$ with label $\{1,2,3\}$ because then $\{1,2,3,4\}$ would span a member of $\B$. Indeed,
$$14\mapsto 145, 24\mapsto 245, 34\mapsto 345$$
and the pairs $12,13,23$ are covered by the triples of $H_6$ containing them. This implies that the double edges $45,56,46$ are labeled w.l.o.g. with $\{1,2\},\{1,3\},\{2,3\}$, respectively, and each vertex of $T$ has one loop.  Now $\{1,2,3,4\}$ spans a member of $\B$ again,
$$14\mapsto 145, 24\mapsto 245, 34\mapsto 346$$
and the pairs $12,13,23$ are covered by the edges of $H_6$ containing them. This is a contradiction, finishing the proof of Theorem \ref{exbk4n=56}.
\end{itemize}
\end{proof}

\noindent{\bf Acknowledgement.} Thanks to Maria Axenovich, D\'aniel Gerbner and Zolt\'an F\"uredi for conversations on the subject.

\eject

\end{document}